\newtheorem{proposition}{Proposition}
\newtheorem{lemma}{Lemma}
\newtheorem{definition}{Definition}
\newtheorem{remark}{Remark}
\newtheorem{assump}{Assumption}[section]
\begin{document}
\def\Box{ \framebox[5.5pt]}

\title{\centering \large \bf Decomposition of the wave manifold in Lax admissible regions}

\author{\large \bf C.S.Eschenazi \and \large \bf C.F.B.Palmeira}

\maketitle

\begin{abstract}

Local solutions of Riemann problems for quadratic systems of two conservation laws were constructed in the geometric context. In this paper, also for quadratic systems, we decompose the characteristic and sonic' surfaces in their slow and fast components.These decompositions allow to decompose the wave manifold in regions called admissible region and non admissible region.There are  admissible regions having local shock curve arcs and non local shock curve arcs. Such regions are  important to construct non local  solutions of Riemann problems. Our study is restricted to the symmetric Case IV in the Sheaffer-Shearer classification.

\end{abstract}


\section{Introduction\label{sec:introduction}}
We consider the equation 
\begin{equation}
\left.
W_t+F(W)_x=0,
\label{eq:cons-law}
\right.
\end{equation}
with initial condition
\begin{equation}
W(x,t=0)=\left\{
\begin{array}{ll}
W_L &\hbox{\rm{if }}x < 0\\
W_R &\hbox{\rm{if }}x > 0.
\end{array}
\right.
\label{eq:initial-data}
\end{equation}

\noindent We will take $W=(u,v)\in \mathbb {R}^2$and $F$ a map $F:\mathbb{R}^2\longrightarrow \mathbb{R}^2$. Equation \eqref{eq:cons-law} appears in fluid dynamic, and together with initial conditions \eqref{eq:initial-data} is called a Riemann problem. 

We are interested in the so called shock solutions, defined by
 
\begin{equation*}
W=\left\{
\begin{array}{ll}
W_1 &\hbox{\rm{for }}x < st\\
W_2 &\hbox{\rm{for }}x > st.
\end{array}
\right.
\end{equation*}

\noindent where $s$ is the speed of shock propagation, $W_1=(u_1,v_1)$ and $W_2=(u_2,v_2)$ are the states to be connected by the shock.

In order to have physical meaning, the speed and the states must satisfy the so called {\it Rankine-Hugoniot} condition 

$$F(W_1)-F(W_2)=s(W_1-W_2),$$ 

\noindent for some $s$. This leads to the definition of Hugoniot curve associated to a given state $(u_0,v_0)$, in section \ref{sec:wmhc}.

Furthermore, not all arcs of Hugoniot curves are useful to construct solutions. Shock curve arcs must satisfy some extra conditions called admissibility conditions. In \cite{AEMP10} Liu's admissibility entropy criterion was introduced within the wave manifold context. There it was shown that under certain extra assumptions, which we consider in the current paper, Liu's entropy criterion can be replaced by the Lax's inequalities conditions. These inequalities relate $s$ and the eigenvalues of $DF(u,v)$. Hugoniot arc curves satisfying these inequalities are called {\it shock curve arcs or admissible arcs}.

We adopt here the topological point of view, as described in \cite{Isaacson92}. We consider the space $\mathbb{R}^5$ of coordinates $(u_0,v_0,u,v,s)$ and in it the 3 dimensional manifold defined by $F(u,v)-F(u_0,v_0)-s(u-u_0,v-v_0)=0$. In this manifold, we consider the curves defined by $(u_0,v_0)$ constant, called Hugoniot curves, which project in the $(u,v)$ plane on the classicals Rankine-Hugoniot curves,\cite{AEMP10}. This manifold is called {\it wave manifold}.      

In a series of papers, this manifold and its Hugoniot curves have been studied in the case where $F$ is a polynomial of degree two. The wave manifold has been characterized, relevant surfaces (characteristic, sonic and sonic') have been defined, the intersection of Hugoniot curves with these surfaces has been studied, and the Lax inequalities have been interpreted in this context. Here, also considering $F$ as a polynomial of degree two, we decompose the characteristic and sonic' surfaces in their fast and slow components and also decompose the wave manifold in regions which we call admissible region or non admissible region. 

As stated in the beginning of Section \ref{sec:wmhc}, we will restrict ourselves to the symmetric Case IV, in the Sheaffer-Shearer classification. 

This paper is organized as follows: In Section \ref{sec:wmhc} we review some basic facts and definitions, introduce new variables and describe the characteristic, sonic and sonic' surfaces, \cite{Isaacson92}. In Section \ref{sec:cscf} we characterize the slow and fast components of the characteristic surface associated with the eigenvalues of $DF$, we also characterize the fold curve, which is the boundary of these two components. In Section \ref{sec:waveman} we describe how the characteristic, sonic and sonic' surfaces divide the wave manifold into twelve regions and characterize the surface formed by the Hugoniot curves through points of the fold curve, this surface is tangent to the characteristic and sonic' surfaces. In Section \ref{sec:sonlifs} we characterize the slow and fast components of the sonic' surface, associated with the slow shock speed and the fast shock speed, we show that the boundaries of these two components are a straight line and the sonic' fold curve, which is the  curve where Hugoniot curves are tangent to the sonic' surface. In Section \ref{sec:lax} we identify some of regions in the wave manifold where Lax's inequalities are satisfied, indicating in which regions there are local shock curve arcs and in which of these regions  there are non local shock curve arcs. It is well known, \cite{Isaacson92}, that a Hugoniot curve through points of the secondary bifurcation has two components, a straight line and a curve. The lines generate a plane, and the curves generate a surface, which we will call {\it Sigma}, and construct in Appendix A. In Appendix B we characterize the regions of the wave manifold where condition L3, introduced in section \ref{sec:lax}, is satisfied.

We hope that this paper can be completed in the future with a full characterization of all regions where Lax conditions are satisfied.

\section {\bf The wave manifold and Hugoniot curves\label{sec:wmhc}}

\vspace{.5cm}


In this section we review some  basic facts and definitions and introduce new variables. We will define the characteristic, sonic and sonic' surfaces, since they will appear as boundaries of the admissible regions.

\vspace{.5cm}
We refer the reader to section 2 of \cite{Eschenazi02} for basic definitions. For sake of completeness we will briefly present what is needed for this paper. We consider the equation (\ref{eq:cons-law}), with $F$ given by $F=(f,g)$, and

\begin{equation}
\left\{
\begin{array}{l}
f(u,v)=v^2/2+(b_1+1)u^2/2+a_1u+a_2v\\
\\
g(u,v)=uv+a_3u+a_4v
\end{array}
\right.
\end{equation}
where, $b_1>1$. This is the symmetric case IV  in the classification of Schaeffer and Shearer, \cite{Schaeffer87}.

Given a point $W=(u,v)$ in what we call the state space, {\it the Hugoniot curve through this point is defined as the set of points $W'=(u',v')$ such that there exists $s$ such that $F(W)-F(W')=s(W-W')$}. It is clearly a curve passing in $W$.

To study Hugoniot curves we consider $\mathbb{R}^5=\{(u,v,u',v',s)\}$ and in it the 3-dimensional manifold defined by $F(W)-F(W')=s(W-W')$. Since this manifold is singular along the diagonal $W=W'$, we perform a blow up, along this diagonal, which in this simple case is obtained by changing coordinates to $U=\frac{u+u'}{2}$, $V=\frac{v+v'}{2}$, $X=u-u'$, $Y=v-v'$, $Z=\frac{Y}{X}$ and factoring $X^2$. We also set $c=a_3-a_2>0$. Using these new coordinates, we get 
$$(1-Z^2)(V+a_2)-Z(b_1U+a_1-a_4)+c=0$$ and $$Y=ZX.$$

Since $Z$ is a direction, we may think of the wave manifold $M$, as contained in $\mathbb R^3 \times \mathbb RP^1$, or, which is the same, $\mathbb R^3\times S^1$. These coordinates are not valid at $Z=\infty$, but there are no special features at infinity, so we can just use $X$, $U$, $V$ to study Hugoniot curves.

Defining $\tilde{U}=b_1U+a_1-a_4$ and $\tilde{V}=V+a_2$, one gets the equation $G_1=(1-Z^2)\tilde V-Z\tilde U+c$, used in \cite{Marchesin94b}, \cite{Eschenazi02}, \cite{Eschenazi13}.

Hugoniot curves in the wave manifolds are defined by $u=constant$ and $v=constant$, which, in these new coordinates, become 
\begin{equation}
\left \{
\begin{array}{l}
2\tilde{U}+b_1X=k\\
2\tilde{V}+ZX=l 
\end{array}
\right.
\label{eq:KL}
\end{equation}
\noindent
As shown in \cite{Marchesin94b}, Hugoniot curves are connected and foliate $M^3$ except along a straight line $B$, contained in the plane, $\Pi$, of equation $Z=0$. The straight line $B$ is called {\it secondary bifurcation}. Hugoniot curves through points in $B$ are formed by two arcs: a straight line $hugl$ and a curve $hugc$. These two arcs intersect transversely in $B$. The straight lines $hugl$ fill the plane $\Pi$, and the curves $hugc$ form a surface $\Sigma$.

The same conclusions are valid for Hugoniot' curves, defined by $u'=constant$ and $v'=constant$. Hugoniot' curves are connected and foliate $M^3$ except along a straight line $B'$, also contained in the plane $\Pi$. Hugoniot' curves through points in $B'$ are formed by two arcs: a straight line $hugl'$ and a curve $hugc'$. These two arcs intersect transversely in $B'$. The straight lines $hugl'$ also fill the plane $\Pi$, and the curves $hugc'$ form a surface $\Sigma'$. 

Three surfaces in the wave manifold are important in this study: Characteristic ($\mathcal {C}$), Sonic ($Son$) and Sonic' ($Son'$).

To define $Son$ and $Son'$, one must look at the speed $s$ as a real function in $M^3$ given by $\displaystyle\frac{f(u,v)-f(u',v')}{u-u'}$ or $\displaystyle\frac{g(u,v)-g(u',v')}{v-v'}$. Using coordinates $X$, $\tilde{U}$ and $\tilde{V}$, we have 
  
\begin{equation}
\left.  
s=\frac{\tilde U}{b_1}+ \frac{ V}{Z},
\right.
\label{eq:sp1}
\end{equation}

To define $Son$, we restrict $s$ to a Hugoniot curve, and look at its critical points. The set of these critical points for all Hugoniot curves is the Sonic surface, $Son$. In the same way we define $Son'$, as the set of all critical points of $s$ restricted to a Hugoniot' curve. This was done in \cite{Marchesin94b}, obtaining equations
\begin {equation}
\left.
2(Z^2+b_1+1)\tilde{U}+2(Z^3+(b_1+3)Z)\tilde{V}-(Z^2-(b_1+1))X=0
\right.
\label{eq:eqson}
\end{equation}

\noindent for $Son$ and 

\begin{equation}
\left.
2(Z^2+b_1+1)\tilde{U}+2(Z^3+(b_1+3)Z)\tilde{V}+(Z^2-(b_1+1))X=0 
\right.
\label{eq:eqson'}
\end{equation}

\noindent for $Son'$.

We list some important known facts about $\mathcal{C}$, $Son$ and $Son'$.

\begin{itemize}
\item[Fact 1-] $\mathcal{C}$ is topologically a cylinder;

\noindent \item[ Fact 2-] Given a Hugoniot curve, either it intersects $\mathcal{C}$ transversally in two points, or it is tangent or it does not intersect $\mathcal{C}$.

\noindent \item[ Fact 3-] The above mentioned set of tangencies points is a circle, which divides $\mathcal{C}$ into two components, denoted by $\mathcal{C}_s$ and $\mathcal{C}_f$. This circle is called {\it coincidence curve} or {\it fold curve}. 

\noindent \item[Fact 4-] Any Hugoniot curve through a point $\mathcal{U}$, $sh(\mathcal{U})$, which intersects $\mathcal{C}$ transversally, does it in a point $\mathcal{U}_s=sh(\mathcal{U})\cap \mathcal{C}_s$ and in another point $\mathcal{U}_f=sh(\mathcal{U})\cap \mathcal{C}_f$. The value of the speed $s$ in $\mathcal{U}_s$ is smaller than the value of $s$ in $\mathcal{U}_f$. Proofs of these facts will be in section \ref{sec:cscf} where we will characterize $\mathcal{C}_s$ and $\mathcal{C}_f$.

\noindent \item[ Fact 5-] $Son$ and $Son'$ are topologically Moebius band and they intersect $\mathcal {C}$ along a curve called {\it inflection locus}, which has two connected components. They also intersect along two straight lines, each of which intersects transversally $\mathcal {C}$ in a point of the inflection locus.

\noindent \item[ Fact 6-] Generically, a Hugoniot curve through a point $\mathcal {U}$, $sh(\mathcal {U})$, intersects $Son'$ in 0, 2 or 4 points. If $sh(\mathcal {U}) \cap Son'$ has 2 points, then $s$ has the same  value in these 2 points as the value of $s$ either in $\mathcal {U}_s$ or in $\mathcal {U}_f$. In the first case the points are said to be in $Son'_s$ and in  the second case in $Son'_f$. If the intersection has 4 points, then $s$ has the same value of $s$ in $\mathcal {U}_s$ in two of them and the value of $s$ in $\mathcal {U}_f$ in the other two. The first two are points in $Son'_s$ and the other two in $Son'_f$.

\end{itemize}
 
\subsection{\bf New coordinates \label{sec:Yzt-subsection}}

In this paper we use coordinates $z=\displaystyle \frac{1}{Z}$, $Y$ and $t$. The main advantage is that they are valid for $M^3-\Pi$,  and $\Pi$ becomes the plane $z=\infty$. To define $t$, we start by writing the equation of $M$ in $\tilde{U}$, $V_1=V+a_3=\tilde{V}+c$ and $z$. We get $G=0$, where $G=(z^2-1)V_1-z\tilde{U}+c$. The parameter $t$ is defined by

\begin{equation}
\tilde U=\displaystyle \frac{2cz}{z^2+1}+t(z^2-1); \\
 V_1=\displaystyle \frac{c}{z^2+1}+tz.
\label{eq:tdef}
\end{equation}

\noindent  For each fixed $z$ parameter $t$ measures as far as $\tilde U$ and $V_1$ are away from the fold curve in the orthogonal direction to it.

In these new coordinates, the characteristic surface, $\mathcal C$,  is the plane $Y=0$, the sonic surface and the sonic' surface are given by the equations obtained from equations \eqref{eq:eqson} and \eqref{eq:eqson'} by changing $\tilde{U}$, $\tilde{V}$, $X$ and $Z$ by $Y$, $t$ and $z$. We get $son=0$, where

\begin{equation}
son= -2c((b_1+1)z^5+(b_1+4)z^3+3z)t-((b_1+1)z^4+b_1z^2-1)Y-2c((b_1-1)z^2+1),
\label{eq:son}
\end{equation}

\noindent and $son'=0$, where
\begin{equation}
son'=-2c((b_1+1)z^5+(b_1+4)z^3+3z)t+((b_1+1)z^4+b_1z^2-1)Y-2c((b_1-1)z^2+1).
\label{eq:son'}
\end{equation}

The shock  speed, $s$,  is written as
\begin{equation}
s=\displaystyle\frac{  c((b_1+1)tz^4+b_1z^2t+(b_1+2)z-t)}{b_1(z^2+1)}
\label{eq:speed}
\end{equation}

Parametric equations for Hugoniot curves in $Y$, $t$, $z$ coordinates are obtained by replacing $\tilde{U}$, $\tilde{V}$ and $X$ into equations in (\ref{eq:KL}), by their values in terms of $Y$, $t$, $z$ and solving the system with respect to $Y$ and $t$. We get:

\begin{equation}
\left\{
\begin{array}{l}
Y=\displaystyle -\frac{(l+2c)z^2-kz-l}{(b_1-1)z^2+1}\\
t=\displaystyle \frac{(z^2+1)(b_1zl-k)+2cz(2+b_1z^2)}{2c((b_1-1)z^4+b_1z^2+1)}.
\end{array}
\right.
\label{eq:parahug}
\end{equation}

Parametric equations for Hugoniot' curves are obtained from equations (\ref{eq:parahug}) by changing $Y$ by $-Y$. 

Substituting the second equation of (\ref{eq:parahug}) into equation (\ref{eq:speed}) we get the expression of the shock speed $s=s(k,l,z)$ along a Hugoniot curve.

In order to get the expression of a Hugoniot curve through a given point $(t_0,z_0,Y_0)$, we start by solving system  \eqref{eq:parahug} in $k$ and $l$ obtaining\\ $k(t,z,Y)=\frac{4cz+2ct(z^4-1)+b_1z(z^2+1)Y}{z^2+1}$ and $l(t,z,Y)=\frac{2cz(z^2+1)t+Y(z^2+1)-2cz^2}{z^2+1}$.

Substituting $k((t_0,z_0,Y_0)$ and $l((t_0,z_0,Y_0)$ into the equations in \eqref{eq:parahug}, we get the parametric equations for the Hugoniot curve through a point $(t_0,z_0,Y_0)$, 

\begin{equation}
\left\{
\begin{array}{l}
Y=\displaystyle \frac{Az^2+Bz +C}{(z_0^2+1)((b_1-1)z^2+1)}\\
t=\displaystyle \frac{Dz^3+Ez^2+Fz+G}{c(z_0^2+1)((b_1-1)z^4+b_1z^2+1)},
\end{array}
\right.
\label{eq:hugponto}
\end{equation}

\noindent where\\
$A=-(2c+2ct_0z_0(1+z_0^2)+Y_0(z_0^2+1));$\\
$B=4cz_0+2ct_0(z_0^4-1)+b_1Y_0z_0(z_0^2+1);$\\
$C=-2cz_0^2+2ct_0z_0(1+z_0^2)+Y_0(z_0^2+1)$;\\
$D=2cb_1+2cb_1t_0z_0(z_0^2+1)+b_1Y_0(z_0^2+1)$;\\
$E=2ct_0(1-z_0^4)-b_1z_0Y_0(z_0^2+1)-4cz_0$;\\
$F=4c(z_0^2+1)+2cb_1t_0z_0(1+z_0^2)+b_1Y_0(z_0^2+1)-2cb_1z_0^2$;\\
$G=-4cz_0+2ct_0(1-z_0^4)-b_1z_0Y_0(1+z_0^2)$.

\section {\bf Characterizing $\mathcal{C}_s $ and $\mathcal{C}_f$\label{sec:cscf}}

\vspace{.5cm}

As we will see in section \ref{sec:lax} admissible shock curve arcs will either start in $\mathcal{C}_s$ or in $Son'_s$. So we must characterize $\mathcal{C}_s$ and $\mathcal{C}_f$. In section \ref{sec:sonlifs} we will define $Son'_s$ and $Son'_f$ and characterize them.

The speed $s$ along a Hugoniot curve through a point $(t_0,z_0,Y_0)$ is obtained by substituting $k(t_0,z_0,Y_0)$ and $l(t_0,z_0,Y_0)$ into the expression $s(k,l,z)$, giving 

\begin{equation}
s_{hug}= \frac{s_{p3}z^3-s_{p2}z^2-s_{p1}z+s_{p0}}{2b_1(z_0^2+1)((b_1-1)z^2+1)}
\label{eq:velhug}
\end{equation}
\noindent where\\
$s_{p3}=b_1(b_1+1)(Y_0(z_0^2+1)+2c(1+z_0t_0(z_0^2+1)));$\\
$s_{p2}=(b_1+1)(b_1z_0Y_0(z_0^2+1)+2c(2z_0+t_0z_0^4-t_0));$\\
$s_{p1}=b_1(Y_0(z_0^2+1)+2c(z_0t_0(z_0^2+1)-2z_0^2-1));$\\
$s_{p0}=b_1z_0Y_0(z^2+1)+2c(2z_0+t_0z_0^4-t_0)$.

The equation of the shock speed $s$ along a Hugoniot curve through a point $(t_0,z_0,0)$ in the characteristic surface ( characteristic speed) is obtained by substituting $Y_0=0$ into equation \eqref{eq:velhug}

\begin{equation}
s_{ch}=\displaystyle \frac{s_{ch3}z^3- s_{ch2}z^2- s_{ch1} z+s_{ch0}}{b_1(z_0^2+1)((b_1-1)z^2+1)},
\label{eq:velcar}
\end{equation}

\noindent where\\
$s_{ch3}=b_1c(t_0z_0(1+z_0^2)+1)(1+b_1)$;\\
$s_{ch2}=c(t_0z_0^4 +2z_0-t_0)(1+b_1)$;\\
$s_{ch1}=b_1c(t_0z_0^3-2z_0^2+t_0z_0-1)$;\\
$s_{ch0}=c(t_0z_0^4 +2z_0-t_0)$.

\begin{lemma}
In these coordinates, the fold curve is given by $Y=0$ and $t=0$. $\mathcal{C}_s$ is given by $t<0$ and $\mathcal{C}_f$ by $t>0$.
\label{lemma:cf-cs}
\end{lemma}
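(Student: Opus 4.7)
The plan is to prove the two claims in turn, using the explicit parametric formulas for Hugoniot curves already available in this section.

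For the fold curve, a point $(t_0,z_0,0)\in\mathcal{C}$ lies on the fold if and only if the Hugoniot through it is tangent to $\mathcal{C}=\{Y=0\}$, i.e., the numerator $Az^2+Bz+C$ of $Y(z)$ in \eqref{eq:hugponto} (with $Y_0=0$) has $z_0$ as a double root. A direct expansion shows the identity $Az_0^2+Bz_0+C\equiv 0$ when $Y_0=0$, confirming that $z_0$ is automatically one root, so the tangency condition reduces to $2Az_0+B=0$. Substituting the expressions for $A$ and $B$ from \eqref{eq:hugponto} and collecting terms, all high-degree contributions should cancel and leave $-2ct_0(z_0^2+1)^2=0$; since $c>0$ and $(z_0^2+1)^2>0$, this forces $t_0=0$, so the fold curve is exactly $\{Y=0,\ t=0\}$.

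For the labeling of $\mathcal{C}_s$ and $\mathcal{C}_f$, Fact~3 says that the fold circle splits the cylinder $\mathcal{C}$ into exactly two connected components, which by the previous paragraph are the half-cylinders $\{t<0\}$ and $\{t>0\}$. The connectedness of each half lets me pin down the labeling from a single test comparison, so I would pick the simplest base point $(t_0,0,0)$. There the $Y$-numerator factors as $-2cz(z+t_0)$, giving the second intersection with $\mathcal{C}$ at $z_1=-t_0$; computing $t_1$ from the $t$-equation of \eqref{eq:hugponto}, and using $(b_1-1)t_0^4+b_1t_0^2+1=(t_0^2+1)((b_1-1)t_0^2+1)$, yields $t_1=-t_0/(t_0^2+1)$, which indeed has sign opposite to $t_0$. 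Evaluating \eqref{eq:velhug} at the two intersection parameters $z=0$ and $z=-t_0$ then gives $s_{hug}(0)-s_{hug}(-t_0)=ct_0$, so for $t_0>0$ the base point carries the strictly larger speed; by Fact~4 this identifies $\{t>0\}$ with $\mathcal{C}_f$ and $\{t<0\}$ with $\mathcal{C}_s$, and connectedness propagates the labeling to the entire half-cylinders.

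The main obstacle is the algebraic simplification of the tangency equation $2Az_0+B=0$ to the clean factor $-2ct_0(z_0^2+1)^2$; all terms of intermediate degree in $z_0$ and $t_0$ must cancel, and tracking coefficients carefully is essential. The rest of the argument is routine: the identification step relies only on the single straightforward computation at $z_0=0$ together with connectedness, avoiding the much heavier comparison of $s_{hug}$ at the two roots of a generic Hugoniot--$\mathcal{C}$ intersection.
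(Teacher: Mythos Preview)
Your fold-curve argument is correct and matches the paper's: both identify the tangency locus by the double-root condition on the quadratic numerator $Az^2+Bz+C$ of $Y(z)$ with $Y_0=0$. Your simplification $2Az_0+B=-2ct_0(z_0^2+1)^2$ is right; the paper phrases the same thing as ``$z_0=z_1$ iff $t_0=0$'' after writing $z_1$ explicitly.

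For the labeling of $\mathcal{C}_s$ and $\mathcal{C}_f$ your route genuinely differs. The paper computes the speed difference at the two intersections for \emph{every} base point $(t_0,z_0,0)$ and obtains the closed form $s_{ch0}-s_{ch1}=c(z_0^2+1)t_0$, so the sign is read off directly from $t_0$. You instead test only at $z_0=0$ and invoke connectedness plus Fact~4 to propagate. This is lighter computationally, but note that the paper explicitly says the proofs of these Facts (in particular Fact~4) are given in this very section; in other words, Lemma~\ref{lemma:cf-cs} is where Fact~4 gets established. Your argument then leans on a statement whose justification the paper is providing through the very computation you are bypassing. The paper's general formula simultaneously proves Fact~4 and identifies the components, with no appeal to connectedness needed. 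If you want a self-contained proof, you should carry out the speed comparison for general $z_0$---the algebra is no worse than what you already did for the tangency step, and the answer $c(z_0^2+1)t_0$ is just as clean.

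One small arithmetic slip: at your test point the second $t$-coordinate is $t_1=-2t_0/(t_0^2+1)$, not $-t_0/(t_0^2+1)$. It does not affect the sign conclusion, and your speed difference $s_{hug}(0)-s_{hug}(-t_0)=ct_0$ is correct and agrees with the paper's general formula specialized to $z_0=0$.
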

\begin{proof}
A Hugoniot curve through a point on the characteristic  has parametric equations
\begin{equation}
\left\{
\begin{array}{l}
Y=\displaystyle \frac{-2c(z-z_0)(t_0z_0(1+z_0^2)+1)z+(t_0(1+z_0^2)-z_0)}{(z_0^2+1)((b_1-1)z^2+1)}\\
t=\displaystyle \frac{D_0z^3+E_0z^2+F_0z+G_0}{c(z_0^2+1)((b_1-1)z^4+b_1z^2+1)},
\end{array}
\right.
\label{eq:hugch}
\end{equation}
\noindent where $D_0$, $E_0$, $F_0$ and $G_0$ are obtained from (\ref{eq:hugponto}). It is just a matter of making $Y_0=0$ in the equations (\ref{eq:hugponto}). 

Solving equation $Y=0$ we get  $z_0$ and $ z_1=-\displaystyle \frac{t_0(1+z_0^2)-z_0}{t_0z_0(1+z_0^2)+1},$  values of  $z$ in intersection points with $C$. It is easy to see that $z_0=z_1$ if and only if $t=0$, characterizing thus the fold curve. 

Let $s_{ch0}$ and $s_{ch1}$  be the characteristic speeds for $z=z_0$ and $z=z_1$. Straightforward computations give $s_{ch0}-s_{ch1}=c(z_0^2+1)t_0$. So $ s_{ch0}>s_{ch1}$ if and only if $t_0>0$. In this way $C_f$ is the half plane $t>0$ and $C_s$ is the half plane $t<0$.  
\end{proof}

\section {\bf Decomposition of the Wave Manifold\label{sec:waveman}}

In this section we describe how the characteristic, sonic and sonic' surfaces divide $M^3-\Pi$.

It follows from equations \eqref{eq:son'} and \eqref{eq:son} that $\mathcal C$, $Son$ and $Son'$ are ruled surfaces. For each fixed $z\neq 0$ we have three straight lines in the $tY$-plane. The intersection of this plane with $\mathcal C$ is just the horizontal axis $Y=0$. In the same way, its intersections with $Son$ and $Son'$ are lines which we will call $Sz$ and $S'z$, intersecting along $Y=0$. In $\mathcal C$ these are points in the inflection locus, $IL$. For $z=0$ the straight lines become horizontal with equations $(t,z=0,Y=2c)$ for $Son$ and $(t,z=0,Y=-2c)$ for $Son'$.

Besides the inflection locus, $Son$ and $Son'$ intersect for $z=\pm \displaystyle \frac {1}{\sqrt{b_1+1}}$, values of $z$ that cancel out the coefficient of $Y$ in equations \eqref{eq:son'} and \eqref{eq:son}. Straihgtforward computations show that for $z= \displaystyle \frac {1}{\sqrt{b_1+1}}$, $t=- \displaystyle \frac{b_1 \sqrt{b_1+1}}{2(b_1+2)}$ and for $z=- \displaystyle \frac {1}{\sqrt{b_1+1})}$, $t= \displaystyle \frac{b_1 \sqrt{b_1+1}}{2(b_1+2)}$. So, for these values of $z$ the two straight lines are coincident and become vertical, $z$ and $t$ are fixed and $Y$ is any real. These two vertical lines are called {\it double sonic locus}. In this way,  $Son \cap Son'$ is formed by the  inflection locus and  the double sonic locus. We can state.

\begin{proposition}
The characteristic, sonic and sonic' surfaces divide $M^3-\Pi$ into twelve regions. 
\label{proposition:divideM}

\end{proposition}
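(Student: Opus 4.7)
The plan is to classify the connected regions of $M^3-\Pi-\mathcal C-Son-Son'$ by the sign triple $(\sigma_c,\sigma_s,\sigma_{s'})\in\{+,-\}^3$ recording the sign of $Y$, of the left-hand side of (\ref{eq:son}), and of the left-hand side of (\ref{eq:son'}). Since this triple is locally constant on the complement, every connected region carries a well-defined triple, and it suffices to determine, for each of the eight possible triples, the number of connected components of the subset of $M^3-\Pi$ on which it is realized.

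For each generic slice $z=\mathrm{const}$ (with $z\notin\{0,\pm 1/\sqrt{b_1+1}\}$), the three surfaces cut the plane in the three concurrent lines described in the paragraph preceding the proposition, producing six open sectors. A direct inspection of signs on these sectors yields: the four triples with $\sigma_s=\sigma_{s'}$ are realized on every generic slice, while the two triples with $\sigma_s\neq\sigma_{s'}$ that appear depend on the sign of $B(z):=(b_1+1)z^4+b_1z^2-1$, namely $\{(+,+,-),(-,-,+)\}$ when $B<0$ and $\{(+,-,+),(-,+,-)\}$ when $B>0$. At the degenerate slice $z=0$ the three lines become the parallels $Y=0,\pm 2c$, and only $(+,+,-),(+,-,-),(-,-,-),(-,-,+)$ are realized on the four strips; at $z=\pm 1/\sqrt{b_1+1}$ the $Y$-coefficient vanishes in both equations, forcing $\sigma_s\equiv\sigma_{s'}$ and leaving only the four triples with matching second and third entries.

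Combining these observations, the set of $z$-values on which a given triple is realized is: all of $\mathbb R$ for $(+,-,-),(-,-,-)$; the connected interval $|z|<1/\sqrt{b_1+1}$ for $(+,+,-),(-,-,+)$; the two-component set $\mathbb R\setminus\{0\}$ for $(+,+,+),(-,+,+)$; and the two-component set $|z|>1/\sqrt{b_1+1}$ for $(+,-,+),(-,+,-)$. Since the slice region of any fixed triple depends continuously on $z$, the three-dimensional region is connected on each connected $z$-interval, giving one 3D component for each such interval. Adding up, four triples contribute one component each and four triples contribute two components each, for a total of $4\cdot 1+4\cdot 2=12$.

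The main technical step is verifying that on each connected $z$-interval where a triple persists, the corresponding 3D region does not pinch apart. For the four triples with $\sigma_s=\sigma_{s'}$ one must check that their slice regions at $z=\pm 1/\sqrt{b_1+1}$ (where the six-sector picture degenerates to four quadrants) are the genuine limits of the sectors on either side; and for $(+,-,-),(-,-,-)$ one must check the analogous statement at $z=0$, where the slice of each is one of the four horizontal strips. Both statements follow from direct inspection of the explicit linear equations of the three slice lines as the parameter $z$ crosses the special values.
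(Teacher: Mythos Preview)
Your proof is correct and rests on the same slicing-by-$z$ strategy as the paper, with the same three critical parameter values $z=0$ and $z=\pm 1/\sqrt{b_1+1}$ governing how the two-dimensional slice regions assemble into three-dimensional ones. The difference is organizational rather than conceptual: the paper argues narratively in the half-space $Y>0$, describing which 2D sectors merge or separate as $z$ crosses each critical value and then invoking the $Y\mapsto -Y$ symmetry, whereas you introduce the sign-triple invariant $(\sigma_c,\sigma_s,\sigma_{s'})$ and reduce the problem to finding, for each of the eight triples, the connected components of its realizing $z$-set. Your bookkeeping makes the final arithmetic $4\cdot 1+4\cdot 2=12$ fully transparent and sidesteps some informal passages in the paper's argument; the paper's version, in return, directly produces the geometric labels (lateral, $SS'$, bridge, tunnel) used downstream. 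The one spot in your sketch that deserves an explicit line rather than ``direct inspection'' is the connectedness of the $(\pm,-,-)$ regions across $z=0$: since the concurrent point runs to $-\infty$ as $z\to 0^{+}$ and to $+\infty$ as $z\to 0^{-}$, the sectors on the two sides open in opposite $t$-directions, and one should exhibit a common thread. A clean fix is to observe that for every $z\in\mathbb R$ the point $(t,Y)=(0,\pm\epsilon)$ with $\epsilon>0$ small lies in the relevant slice, because there $son=\mp B(z)\epsilon+C(z)$ and $son'=\pm B(z)\epsilon+C(z)$ are both negative (as $C(z)=-2c((b_1-1)z^2+1)<0$); hence the curve $z\mapsto(0,z,\pm\epsilon)$ connects all the slices.
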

\begin{proof}
Due to symmetry in $Y$, it is enough to consider the half-space $Y>0$, since a symmetric division will appear in the $Y<0$ half-space. For each fixed $z$ we will look at the 2-dimensional regions in which the plane $\Pi_z$, defined by fixing $z$, is divided and see how these 2-dimensional regions form 3-dimensional regions as $z$ moves. As described above, there are 3 critical values in the $z$ 
movement: $z=-\displaystyle \frac {1}{\sqrt{b_1+1}}$, $z=0$ and $z=\displaystyle \frac {1}{\sqrt{b_1+1}}$.  As $z$ moves, $Sz$ and $S'z$ move and generate a 3-dimensional region in space. Let us describe what happens when $z$ crosses each critical value:

We start with $z=-\displaystyle \frac {1}{\sqrt{b_1+1}}$. As $z$ approaches this value, $Sz$ and $S'z$ become vertical, so the region between $Son$ and $Son'$ for $z<-\displaystyle \frac {1}{\sqrt{b_1+1}}$ disappears. When $z$ crosses the critical value, $Sz$ and $S'z$ just change their relative positions and the two regions bounded, respectively by $Son$ and $\mathcal C$ and $Son'$ and $\mathcal C$ connect with the corresponding region in $-\displaystyle \frac {1}{\sqrt{b_1+1}}<z<0$. So in the half space defined by $z<0$ and $Y>0$ there are four regions: one bounded by $Son$ and $Son'$ contained in $z<-\displaystyle \frac {1}{\sqrt{b_1+1}}$; one bounded also by $Son$ and $Son'$ contained in $-\displaystyle \frac {1}{\sqrt{b_1+1}}<z<0$; and two bounded by 
$Son$, $Son'$ and $\mathcal C$ contained in $z<0$. Symmetrically there are four regions in $z>0$.
Let us describe how the three regions in $\displaystyle \frac {1}{\sqrt{b_1+1}}<z<0$ connect with the three regions in $0<z<\displaystyle \frac {1}{\sqrt{b_1+1}}$. 

As $z \to 0^+$, the intersection point of $Sz$ and $S'z$ goes to $-\infty$, $Sz$ becomes the line $Y=-2C$ and $S'z$ becomes the line $Y=-2c$. So, the region under $S'z$ is pushed to infinity and does not connect to the regions on $z<0$, generating two regions: one above $Sz$ and one limited by $Sz$ and $\mathcal C$. In this way we have six regions in the half space $Y>0$: two regions for $-\displaystyle \frac{1}{\sqrt{b_1+1}}<z<\displaystyle \frac {1}{\sqrt{b_1+1}}$; 2 regions for $z<-\displaystyle \frac {1}{\sqrt{b_1+1}}$ and two regions for $z>\displaystyle \frac {1}{\sqrt{b_1+1}}$. Symmetrically there are six more regions in the half-space $Y<0$.
\end{proof}

\vspace{.5cm}


Figure 1 illustrates the twelve regions in $M^3$. Sonic and sonic' surfaces intersect along the inflection locus (the hyperbola like curve in the characteristic surface) and also along two vertical lines transversal to the characteristic. We call {\it $SS'$} the  3-dimensional regions contained in $z^2> \displaystyle \frac {1}{b_1+1}$ and bounded by $Son$, $Son'$, and $IL$. There are four such regions, which we can specify by the signs of $z$ and $Y$. We call {\it lateral regions} the regions bounded by $Son$, $\mathcal C$ and $Son'$ contained, respectively, in $z>0$ and in $z<0$. There are four such regions, again we can specify them by the signs of $z$ and $Y$. Let us describe the four regions in $z^2< \displaystyle \frac {1}{b_1+1}$. In subspace $Y>0$, the sonic surface is connected, we call it {\it bridge}, and the sonic' surface is not connected. We have two regions: the {\it above bridge} region bounded by $Son$ and $Son'$ and the {\it below bridge} region bounded by $\mathcal C$, $Son$ and $Son'$. In subspace $Y<0$, the sonic' surface is connected, we call it {\it tunnel}, and the sonic surface is not connected. We have two regions: one bounded by $Son$, $Son'$ and $\mathcal C$, called {\it above tunnel} and other bounded by $Son$ and $Son'$, called {\it below tunnel}.
\begin{figure}[htpb]
\begin{center}
\includegraphics[scale=0.8,width=0.5\linewidth]{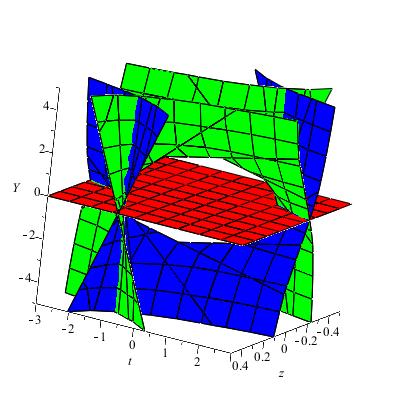}
\caption[]{A view of $C$, $Son$ and $Son'$ in $M^3$}
\end{center}
\end{figure}

We recall that the fold curve in $\mathcal C$ separates $C_f$ and $C_s$. Hugoniot curves through points of the fold curve are tangent to $\mathcal C$ and $Son'$, \cite{Eschenazi02}. We will denote by $Tf$ the 2-dimensional submanifold of $M^3$ generated by Hugoniot curves through points of the fold curve.

\begin{lemma}
The surface $Tf$ is tangent to $\mathcal{C}$ and to $Son'$.
\label{lemma:Tf}
\end{lemma}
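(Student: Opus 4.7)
The plan is to parametrize $Tf$ as the image of the map $(z_0, z) \mapsto (Y(z_0, z), t(z_0, z), z)$, where $Y(z_0, z)$ and $t(z_0, z)$ are the Hugoniot coordinates obtained from \eqref{eq:hugponto} by setting $t_0 = Y_0 = 0$. Direct substitution collapses $A$, $B$, $C$ to $-2c$, $4cz_0$, $-2cz_0^2$, and the cubic in the numerator of $t$ vanishes at $z = z_0$; after factoring, one gets
\begin{equation*}
Y(z_0, z) \;=\; \frac{-2c\,(z - z_0)^2}{(z_0^2 + 1)\bigl((b_1-1)z^2 + 1\bigr)},
\end{equation*}
together with an analogous expression for $t(z_0,z)$ whose numerator has $(z - z_0)$ as a simple factor. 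The image of the diagonal $\{z = z_0\}$ in the parameter plane is precisely the fold curve $\{Y = 0,\ t = 0\}$, so $Tf$ contains the fold curve.

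For the tangency to $\mathcal C$, I would observe that at a point $p = (0,0,z_0)$ of the fold curve, the tangent plane to $Tf$ is spanned by the images of $\partial/\partial z_0$ and $\partial/\partial z$ under the parametrization. Because $(z - z_0)^2$ divides $Y(z_0, z)$, both $\partial Y/\partial z$ and $\partial Y/\partial z_0$ vanish on the diagonal $z = z_0$. Hence both generators of $T_p Tf$ have zero $Y$-component, placing them in $T_p \mathcal C = \{Y = 0\}$; a dimension count then forces $T_p Tf = T_p \mathcal C$, and $Tf$ is tangent to $\mathcal C$ along the entire fold curve.

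For the tangency to $Son'$, the same style of argument works, but the ``contact locus'' is a different curve. By the result of \cite{Eschenazi02} recalled immediately above the lemma, each Hugoniot curve through a fold point meets $Son'$ tangentially at (at least) one point; as $z_0$ sweeps the fold curve, these tangency points trace out a curve $\gamma \subset Tf \cap Son'$. At any $p \in \gamma$, one generator of $T_p Tf$ is the Hugoniot tangent, which lies in $T_p Son'$ by the cited tangency, and the other is the tangent to $\gamma$, which lies in $T_p Son'$ because $\gamma \subset Son'$. Consequently $T_p Tf \subset T_p Son'$, and equality follows from dimension.

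The main nontrivial input is the cited tangency of individual Hugoniot curves to $Son'$; in this paper I would use it as a black box. If one wanted to reprove it, one would substitute the explicit $Y(z_0, z)$ and $t(z_0, z)$ above into \eqref{eq:son'} and verify, after clearing denominators via the factorizations $(b_1+1)z^4 + (b_1+4)z^2 + 3 = ((b_1+1)z^2 + 3)(z^2+1)$ and $(b_1-1)z^4 + b_1 z^2 + 1 = ((b_1-1)z^2 + 1)(z^2+1)$, that the resulting polynomial in $z$ admits a double root depending smoothly on $z_0$. This discriminant-style factorization is the only genuinely computational obstacle in an otherwise conceptual argument.
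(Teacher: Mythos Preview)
Your argument is correct, but the route differs from the paper's. The paper first eliminates the fold parameter to obtain an \emph{implicit} equation for $Tf$: using that fold points correspond to $(k,l)$ on the ellipse $k^2+4l^2+8cl=0$, it substitutes $k=k(t,z,Y)$, $l=l(t,z,Y)$ into that ellipse and arrives at a quadratic-in-$Y$ equation $tf_1Y^2+tf_2Y+tf_3=0$. Tangency with $\mathcal C$ and $Son'$ is then read off algebraically: setting $Y=0$ yields $4c^2t^2(z^2+1)^2=0$, a perfect square in $t$; substituting $Y$ from the $Son'$ equation yields another perfect square, equation \eqref{eq:Tfzt}. You instead keep the \emph{parametric} description $(z_0,z)\mapsto(Y,t,z)$ and argue differentially: the double factor $(z-z_0)^2$ in $Y$ kills the $Y$-components of both parametric tangent vectors on the diagonal, while the simple factor $(z-z_0)$ in $t$ ensures the map is still an immersion there, so $T_pTf=\{Y=0\}$. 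For $Son'$ you invoke the individual-curve tangency from \cite{Eschenazi02} and span $T_pTf$ by the Hugoniot direction and the tangent to the contact curve $\gamma$.

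Each approach has its payoff. The paper's implicit equation is reused immediately afterward to locate $Tf$ in the half-space $Y<0$ (by noting each $z$-section is an ellipse tangent to the $t$-axis) and to extract the parametric equations \eqref{eq:parsonli} of the sonic$'$ fold; your parametric proof would require a separate argument for those facts. Conversely, your method is cleaner conceptually and avoids the elimination step, and it makes transparent why the fold curve appears with multiplicity two. One small point worth making explicit in your write-up: for the $Son'$ part you should note that the Hugoniot tangent and the tangent to $\gamma$ are independent (equivalently, that the contact point moves transversally to the Hugoniot foliation of $Tf$), which is exactly what guarantees the two vectors span $T_pTf$ rather than a line.
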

\begin{proof}
A Hugoniot curve through a point of the fold curve is given by\\ $2\tilde{U}+b_1zY=k$, $2(V-c)+Y=l$, where $(k,l)$ is a point of the ellipse\\ $k^2+4l^2+8cl=0$. Using equations \eqref{eq:tdef} in the expressions of $k$ and $l$ and substituting in the ellipse equation, we get equation for the surface $Tf$:
\begin{equation}
\frac{tf_1Y^2+tf_2Y+tf_3}{z^2+1}=0
\label{eq:tfequ}
\end{equation}

\noindent where

\noindent $tf_1=(z^2+1)(b_1^2z^2+4)$;\\
$tf_2=4c(z(z^2+1)(b_1z^2-b_1+4))t+2((b_1-1)z^2+1)$;\\
$tf_3=4c^2t^2(z^2+1)^3.$

The intersection of $Tf$ and $\mathcal {C}$ is obtained putting $Y=0$ in equation \eqref{eq:tfequ}. Doing so we get $4c^2t^2(z^2+1)^2=0$. It follows that $Tf$ is tangent to $\mathcal {C}$ along the straight line $t=0$, the fold curve.

Hugoniot curves are tangent to $Son'$ along a curve called {\emph{sonic' fold}}, somewhat improperly called sonic fold in \cite{Eschenazi02}.
To show that $Tf$ surface is tangent to $Son'$ we solve the equation of \eqref{eq:son'} in $Y$, getting
\begin{equation}
\textstyle Y=\textstyle \frac{2c((b_1+1)z^5+(b_1+4)tz^3+(b_1-1)z^2+3tz+1)}{(b_1+1)z^4+b_1z^2-1}
\label{eq:sonlizt}
\end{equation}

By Substituting $Y$ from equation \eqref{eq:sonlizt} in equation \eqref{eq:tfequ} we obtain
\begin{equation}
\left(
\textstyle \frac{2c((1+z^2)((b_1+1)^2z^4+2(b_1+3)z^2+1)t+(2+b_1)z((b+1-1)z^2+1)}{(1+z^2)((b_1+1)z^2-1)}
\right)^2=0.
\label{eq:Tfzt}
\end{equation} 

So $Tf$ is tangent to $Son'$. $Tf$ is topologically a cylinder, each section $z=constant$ is an ellipse. It is easy to see that all ellipses are tangent to the $t$ axis at $(0,0)$ and contained in the $Y<0$ half plane, so $Tf$ is contained in the $Y<0$ half space, and in fact, in the above tunnel region.
\end{proof} 

We remark that equation \eqref{eq:Tfzt} is the projection of $Son'\cap Tf$ (sonic' fold curve) in plane $(z,t)$. 

Parametric equations for sonic' fold curve are obtained solving  equation \eqref{eq:Tfzt} in $t$, and substituting $t(z)$ in equation \eqref{eq:sonlizt}. Straightforward calculations give the parametric equations as,
\begin{equation}
\left \{
\begin{array}{l}
z=z \\
t=-\frac{(b_1+2)z((b_1-1)z^2+1))}{(z^2+1)((b_1+1)^2z^4+2(b_1+3)z^2+1)}\\  
Y=-\frac{2c((b_1-1)z^2+1)}{(b_1+1)^2z^4+2(b_1+3)z^2+1},
\end{array}
\right.
\label{eq:parsonli}
\end{equation}


Changing $Y$ by $-Y$ in equation \eqref{eq:tfequ} we get equation of $Tf'$, the surface generated by Hugoniot' curves through points of the fold curve. The surface $Tf'$ is just the reflection of $Tf$ in the $Y=0$ plane, so it is contained in the $Y>0$ half space under the bridge.

In a similar way as in lemma \ref{lemma:Tf}, the surface $Tf'$ is tangent to $\mathcal C$ and to $Son$.

\section {\bf Finding $Son'_s$ and $Son'_f$. \label{sec:sonlifs}}

In this section we describe how $Son'$ splits into {\it slow sonic' surface}, $Son'_s$, and {\it fast sonic' surface}, $Son'_f$. In order to do so we will take a point in $Son'$, the Hugoniot curve through this point, find the intersection with $\mathcal {C}$, calculate the shock speed in each of these 2 points and see which point of $\mathcal C$ has the same speed as our initial point in $Son'$.

Given a point in $Son'$, $(t'_0,z_0,Y_0)$, $t'_0= \frac{Y_0(z_0^2+1)((b_1+1)z_0^2-1)-2c((b_1-1)z_0^2+1)}{cz_0(z_0^2+1)((b_1+1)z_0^2+3)}$. 

\noindent The shock speed along a Hugoniot curve through this point is obtained by substituting $t_0$ by $t'_0$ into equation \eqref{eq:velhug}
\begin{equation}
s_{son'}=\frac{s_{son'3}z^3-s_{son'2}z^2-s_{son'1}z+s_{son'0}}{2b_1z_0((b_1-1)z^2+1)((b_1+1)z_0^2+3))}
\label{eq:speedson'}
\end{equation}

\noindent where,\\
$s_{son'3}=2b_1z_0(b_1+1)(Y_0(b_1+1)z_0^2+1)+2c);$\\
$s_{son'2}=(b_1+1)(Y_0((b_1z_0^2+1)^2+2z_0^2(b_1z_0^2-1)+z_0^4)+2c((b_1+3)z_0^2+1));$\\
$s_{son'1}=2b_1z_0(Y_0(b_1+1z_0^2+1)-2c(b_1+1)z_0^2+2));$\\
$s_{son'0}=(Y_0((b_1z_0^2+1)^2+2z_0^2(b_1z_0^2-1)+z_0^4)+2c((b_1+3)z_0^2+1)).$

\noindent The shock speed at this point, $s_{son'}(t_0',z_0,Y_0)$, is obtained putting $z=z_0$ into equation \eqref{eq:speedson'},

\begin{equation}
s_{son'}(t_0',z_0,Y_0)=\frac{((b_1+1)z_0^2-1)^2Y_0+6c(b_1+1)z_0^2+2c}{2b_1z_0((b_1+1)z_0^2+3)}
\label{eq:speedsonponto'}
\end{equation} 

Parametric equations for the Hugoniot curve through $(t_0',z_0,Y_0)$ are obtained changing $t_0$ into $t_0'$ in equations \eqref{eq:hugponto}, giving $(t_{hu}(z),z,Y_{hu}(z))$. Solving equation $Y_{hu}=0$ we obtain the $z$ coordinates of  the intersection points of Hugoniot curve with $\mathcal C$, $z_{\mathcal {C}_1}= \frac{(b_1+1)z_0^2+1}{2z_0}$, $z_{\mathcal {C}_2}= -\frac{2(Y_0-c)z_0}{(b_1+1)Y_0z_0^2+Y_0+2c}$. Substituting $z_{\mathcal {C}_1}$ and $z_{\mathcal {C}_2}$ in the expression of $t_{hu}(z)$ we get the $t$ coordinates of intersection points, 
$t_{\mathcal {C}_1}=\frac{((b_1+1)^2z_0^4+2(b_1+3)z_0^2-1)Y_0-2c((b_1+3)z_0^2+1)}{c((b_1+1)z_0^2+3)((b_1+1)^2z_0^4+2(b_1+3)z_0^2+1)}$\\ and\\
$t_{\mathcal {C}_2}=-\frac{((b_1+1)z_0^4+2(b_1+3)z_0^2+1)Y_0+2c((b_1-1)z_0^2+1))(((b_1+1)z_0^2+1)Y_0+2c)^2}{2cz_0((b_1+1)z_0^2+3))((b_1+1z_0^4+2(b_1+3)z_0^2+1)Y_0^2+4c((b_1-1)z_0^2+1)Y_0+4c^2(z_0^2+1))}$, res\-pec\-ti\-ve\-ly. So intersection points of Hugoniot curve with $\mathcal C$ are $(t_{\mathcal {C}_1}, z_{\mathcal {C}_1},0)$ and $(t_{\mathcal {C}_2}, z_{\mathcal {C}_2},0)$. 

Substituting in equation \eqref{eq:speed} we obtain \\$s_{\mathcal {C}_1}=\frac{((b_1+1)^3z_0^4+2((b_1+1)^2-2)z_0^2+b_1+1)Y_0+2c((b_1+1)^2z_0^2+2z_0^2+b_1+1)}{2z_0b_1((b_1+1)z_0^2+3)}$ and\\
$s_{\mathcal {C}_2}=\frac{((b_1+1)z_0^2-1)^2Y_0+6c(b_1+1)z_0^2+2c}{2b_1z_0((b_1+1)z_0^2+3)}$. A simple inspection shows that $s_{\mathcal {C}_2}=s_{son'}$. 

According to Lemma 1, we must to study the signal of $t_{\mathcal {C}_2}$. If $t_{\mathcal {C}_2}>0$, the point $(t_0,z_0,Y_0)$ is in $Son'_f$, otherwise is in $Son'_s$.

\begin{proposition}  
The sonic' fold curve and the straight line $(t,z=0,Y=-2c)$ are the boundaries of $Son'_s$ and $Son'_f$.
\label{proposition:son'sson'f}
\end{proposition}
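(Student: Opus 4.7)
The strategy is to use the scalar $t_{\mathcal{C}_2}$ computed in the paragraph preceding the statement as a sign indicator for the partition of $Son'$. Since $(t_{\mathcal{C}_2}, z_{\mathcal{C}_2}, 0)$ is the point of $\mathcal{C}$ on the Hugoniot curve through $(t_0', z_0, Y_0)\in Son'$ carrying the same shock speed as the base point, Lemma \ref{lemma:cf-cs} gives $(t_0', z_0, Y_0)\in Son'_s$ if and only if $t_{\mathcal{C}_2} < 0$, and $(t_0', z_0, Y_0)\in Son'_f$ if and only if $t_{\mathcal{C}_2} > 0$. The common boundary of $Son'_s$ and $Son'_f$ inside $Son'$ is therefore the locus where $t_{\mathcal{C}_2}$ vanishes or where its defining formula degenerates.

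The numerator of $t_{\mathcal{C}_2}$ factors visibly as $N_1\cdot N_2^{2}$, with both $N_1$ and $N_2$ linear in $Y_0$. Solving $N_1=0$ for $Y_0$ as a function of $z_0$ reproduces exactly the $Y$-component of the parametric equations \eqref{eq:parsonli} of the sonic' fold, so $\{N_1=0\}\cap Son'$ is the sonic' fold curve. Since $N_1$ vanishes to first order on $Son'$ and the remaining factor $N_2^{2}$ together with the denominator keep a locally constant sign (away from their own zero loci), $t_{\mathcal{C}_2}$ changes sign across the sonic' fold, establishing it as one component of the boundary. For the straight line, note that the denominator of $t_{\mathcal{C}_2}$ contains an explicit factor $z_0$, while equation \eqref{eq:son'} with $z=0$ forces $Y=-2c$; hence $Son'\cap\{z=0\}$ is precisely the line $\{(t,0,-2c)\}$. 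On this line the formula for $z_{\mathcal{C}_2}$ becomes an indeterminate $0/0$ and one intersection of the Hugoniot curve with $\mathcal{C}$ escapes to infinity (namely $z_{\mathcal{C}_1}\to\infty$). This qualitative breakdown of the construction forces the line into the boundary; substituting $(t_0,z_0,Y_0)=(t_0',0,-2c)$ into \eqref{eq:hugponto} yields the single finite intersection at $z=-1/t_0'$, and one checks directly that the sign of its $t$-coordinate tracks the sign of $t_0'$.

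The main difficulty is the squared factor $N_2^{2}$: being non-negative, it cannot by itself induce a sign change in $t_{\mathcal{C}_2}$, so the contribution of the straight line to the boundary does not come from the numerator vanishing on $Son'$ but from the degeneration of the denominator at $z_0=0$. A careful leading-order expansion along $Son'$, using equation \eqref{eq:son'} to eliminate $t_0'$, is needed to confirm that the sonic' fold curve and the straight line exhaust all sign-changes of $t_{\mathcal{C}_2}$ on $Son'$, with no further spurious boundary pieces coming from the zeros of $N_2$ or from the quadratic-in-$Y_0$ factor in the denominator.
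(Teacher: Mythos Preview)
Your approach is essentially the paper's: both reduce the question to the sign of $t_{\mathcal{C}_2}$, invoke Lemma~\ref{lemma:cf-cs}, and identify the vanishing locus of the linear factor $N_1=A(z_0)Y_0+B(z_0)$ with the sonic' fold via equation~\eqref{eq:parsonli}. The difference is in execution, not in idea.

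Where the paper is crisp, you leave gaps. The paper's proof disposes of the two obstacles you flag at the end of your write-up by direct computation: (i) the quadratic-in-$Y_0$ factor in the denominator has negative discriminant and positive leading coefficient, so it is everywhere positive and contributes no sign change; (ii) the squared factor $N_2^{2}$ is non-negative, so the numerator has the sign of $-N_1$. Together this gives $\operatorname{sign}(t_{\mathcal{C}_2})=\operatorname{sign}\bigl(-N_1/z_0\bigr)$ in one line. You acknowledge that these checks are needed, but you do not carry them out; the sentence ``a careful leading-order expansion along $Son'$ \dots\ is needed'' is an admission, not a proof. Write the discriminant down and observe it is $-16c^{2}z_0^{2}(b_1+2)^{2}((b_1-1)z_0^{2}+1)$, which is negative for $z_0\neq 0$; that finishes it.

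Your treatment of the straight line is also more elaborate than necessary. The paper simply observes that, once the quadratic factor is known to be positive, the denominator of $t_{\mathcal{C}_2}$ has the sign of $z_0$, so the locus $z_0=0$ is where the denominator, and hence $t_{\mathcal{C}_2}$, changes sign; on $Son'$ this locus is exactly the line $(t,0,-2c)$ by setting $z=0$ in~\eqref{eq:son'}. Your detour through the indeterminate form of $z_{\mathcal{C}_2}$ and the direct substitution at $z_0=0$ recovers the same conclusion, but it is not needed for the sign analysis and it obscures the mechanism. Drop the degeneration discussion and state the sign of the denominator directly.
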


\begin{proof} We have to study the sign of $t_{\mathcal {C}_2}$. The denominator is the product of $2cz_0((b_1+1)z_0^2+3))$ and a polynomial of degree 2 in $Y_0$ with negative discriminant. Since the coefficient of $Y_0^2$ is positive the denominator has the same sign as $z_0$. The numerator is a term of the form $A(z_0)Y_0+B(z_0)$, $A$ and $B$ positives, multiplied by a positive term, everything preceded by a minus sign.

It follows that the curves $ z = 0 $ and $ AY + B = 0,$ in $Son'$, are the boundaries of $ Son'_s $ and $ Son'_f $. If $Y>-B/A$ and $z>0$ the point is in $Son'_s$, switching according to sign change.
 
 The curve $AY+B=0$ is the projection of sonic' fold curve on  $zY$ plane( the third equation in \eqref{eq:parsonli}). In $Son'$, $z=0$ is the straight line $(t,z=0,Y=-2c)$. It is the intersection of $Son'$ with the surface Sigma, $\Sigma$, formed by Hugoniot curves through points of the secondary bifurcation, $B_0$( see Appendix 1). The straight line and sonic' fold curve intersect transversally forming a saddle point in $Son'$.
 
\end{proof}

\section {\bf Lax Admissibles Regions in $M^3$. \label{sec:lax}}

In this section we identify regions in  $M^3$  where Lax's conditions are satisfied, indicating in which regions there are local shock curve arcs, and in which of these each regions there are nonlocal shock curve arcs. We refer the reader to section 3 of \cite{AEMP10} for admissibility conditions in $M^3$.

Giving a point $\mathcal {U} \in M^3$, we use $sh(\mathcal {U})$ ( $sh'(\mathcal {U})$) to denote the Hugoniot (Hugoniot') curve through $\mathcal {U}$. We define $\mathcal {U}_s$, $\mathcal {U}_f$, $\mathcal {U'}_s$ and $\mathcal {U'}_f$ to be the points

$$
\mathcal {U}_s=sh (\mathcal {U}) \cap \mathcal C_s, \hspace{.2cm}
\mathcal {U}_f=sh (\mathcal {U}) \cap \mathcal C_f,\hspace{.2cm}
\mathcal {U}_{s}^{\prime} = sh'(\mathcal {U}) \cap \mathcal C_s \hspace{.2cm} \text { and } \hspace{.2cm}
\mathcal {U}_{f}^{\prime} = sh'(\mathcal {U}) \cap \mathcal C_f.
$$

We introduce the following simplifying assumptions, \cite{AEMP10}: 

\begin{assump}
\label{assu:assumption1}
We will deal only with points $\mathcal{U}$ in $M^3$ such that the points
$\mathcal {U}_s$ and $\mathcal{U}_f$ always exist.
\end{assump}
\begin{assump}
\label{assu:bifsec}
We consider only Hugoniot and Hugoniot$'$ curves that are 
diffeomorphic to $\mathbb{R}$; of course, they do not 
contain points in the secondary bifurcation locus.
\end{assump}

A Hugoniot curve arc is admissible if it satisfies the following conditions:

\begin{itemize}
\item [{\bf L1 -}] It is oriented in the direction of the speed, $s$, decreasing;
\item [{\bf L2 -}] The speed $s$ in any point $\mathcal U$ in the arc satisfies $s(\mathcal {U})<s( \mathcal {U}_s)$;
\item [{\bf L3 -}] The speed $s$ in any point $\mathcal U$ in the arc satisfies $s(\mathcal {U}_{s}^{\prime})<s(\mathcal {U})<s( \mathcal {U'}_f)$.
\end{itemize}

It follows from Lax conditions that:

\begin{itemize}
\item [1-] Condition L1 implies that admissible arcs do not contain points of $Son$ since the speed $s$ reaches extremum values in $Son$.
\item [2-] Condition L2 implies that admissible arcs begin at $C_s$ or $Son'_s$ since the speed $s$ at a point $\mathcal {U} \in Son'$ is such that $s(\mathcal {U})=s( \mathcal {U'}_s)$.
\item [3-] Conditions L1 and L2 imply that an admissible arc ends in $Son$ or goes to infinity (as $s$ decreases and $z$ goes to $-\infty$). 
\item [4-]  By the continuity of the speed $s$ it is sufficient to check condition L3 in points of $Son'_s$ for admissibles arcs starting there.
\end{itemize}

We can summarize as follow: Admissibles arcs start at $C_s$ or $Son'_s$ oriented toward $s$ decreasing, They end at $Son$ or go to infinity. Since the condition L3 is trivially satisfied at points in $C_s$, \cite{AEMP10}, it is sufficient to check it at initial point in $Son'_s$.
\begin{definition}
Admissible arcs starting at $C_s$ are called local admissible arcs or local shocks. Admissible arcs starting at $Son'_s$ are called non local admissible arcs or non local shocks. In $M^3$ regions formed by admissible arcs are called admissible regions, otherwise they are called non admissible regions. An admissible region formed by local admissible arcs is called a local admissible region and an admissible region formed by non local admissible arcs is called a non local admissible region.  
\end{definition}

We aim to determine the admissible regions in $M^3$. It is well known that a Hugoniot curve intersecting $\mathcal C$ has a local arc, \cite{AEMP10}. Given a point in $\mathcal{C}_s$, we know that the Hugoniot curve is transversal to $\mathcal{C}$ at this point. Let us find out to which side of $\mathcal{C}_s$ does $s$ decreases. This side will be formed by local admissible arcs. Given a point $(t_0,z_0,Y=0)$ in $\mathcal{C}_s$, we have the expression of $s$ restricted to the Hugoniot curve, as a function of $z$. So we will compute its derivative, $\displaystyle \frac{ds}{dz}$ at this point. If it is negative, $s$ is decreasing as $z$ increases, and if it is positive, $s$ decreases as $z$ decreases. How do we know to which side is $z$ increasing? To answer this question we check the $Y$ component of the Hugoniot curve parametrization. If $\displaystyle \frac{dY}{dz}>0$, $z$ increases as we move from the half space $Y>0$ into the half space $Y<0$, and otherwise if $\displaystyle \frac{dY}{dz}<0$.

The speed $s$ along a Hugoniot curve through $(t_0,z_0,0)\in \mathcal{C}$ is given by the equation \eqref{eq:velcar}. The derivative of $s$ at this point is given by 
$$\frac{ds_{ch}}{dz}=\frac{c((b_1+1)z_0^5+(b_1+1)z_0^3+3z_0)t_0+(b_1-1)z_0^2+1)}{(z_0^2+1)((b_1-1)z_0^2+1)}.$$

The numerator of this expression is what we obtain by making $Y=0$ in either of the two equations \eqref{eq:son} and \eqref{eq:son'}. So it is the equation of the inflection locus, $IL$, in the $Y=0$ plane, i.e., $\mathcal{C}$. Since the denominator is positive, the sign of $\displaystyle \frac{ds}{dz}$ changes as we cross $IL$. Since at $(0,0,0)$ it is positive, we can say that $\displaystyle \frac{ds}{dz}>0$ between two branches of $IL$ and is negative outside them.

From the first equation in \eqref{eq:hugch}, the derivative of $Y$ with respect to $z$ at $(t_0,z_0)$ is given by $\frac{dY}{dz}=-\frac{2c(z_0^2+1)t_0}{(b_1+1)z_0^2+1}$. So $\frac{dY}{dz}>0$ in $\mathcal{C}_s$.

We see that $\mathcal{C}_s$, $t<0$, is divided in two subregions by the $t<0$ branch of $IL$. In the subregion of $\mathcal{C}_s$ between the fold, $t=0$, and $IL$, $\displaystyle \frac{dY}{dz}>0$ and $\displaystyle \frac{ds}{dz}>0$. It follows that $s$ increases from under the plane $Y=0$ to over it, so the local admissible arcs are the ones going into the above tunnel region. In the other subregion, the local admissible arcs are the ones going up, i.e., into the lateral region.

Thus we can say that the admissible local regions are respectively contained in the $Y>0$, $t<0$ lateral region, and in the above the tunnel region. The above the tunnel region is divided in three regions by $Tf$: a subregion under $\mathcal{C}_f$, $UC_f$, a subregion under $\mathcal{C}_s$, $UC_s$ and the interior of the surface $Tf$, $ITf$. The last region is formed by Hugoniot curves which do not intersect $\mathcal{C}$. Only subregion $UC_s$ contains local admissible   
arcs.

\begin{remark}
In principle, if an admissible arc intersects $Tf'$, then the condition L3 stops being verified because the Hugoniot' curve through the intersection point does not intersect $\mathcal{C}$. However, in view the above there are no such intersections, since $Tf'$ is contained in a non admissible region, $Y>0$, under the bridge.  
\end{remark}

Let us study non local admissible arcs, i.e., Hugoniot curve arcs which start $Son'_s$, $s$ decreasing. We will proceed as for $\mathcal{C}_s$ and compute $\displaystyle \frac{ds}{dz}>0$ for a point $(t'_0,z_0,Y_0)$ in $Son'$.

Differentiating equation \eqref{eq:speedson'} with respect to $z$ and putting $z=z_0$ we get that the derivative of the speed $s$ at the point $(t'_0,z_0,Y_0)$ is given by
$\frac{ds}{dz}(t'_0,z_0,Y_0)=\frac{Y_0((b_1+1)z_0^2-1)}{(b_1-1)z_0^2-1}$. So it has the same sign of $sss=Y_0((b_1+1)z_0^2-1)$.

To check for which side of $Son'_s$ the speed $s$ is increasing, we will compute the scalar product of the tangent vector of the Hugoniot curve $(t_{hug},z,Y_{hug})$ and the gradient of $Son'$, $\nabla son'$, both evaluated at $(t'_0,z_0,Y_0)$. Since at the origin $son'$ is equal to $-2c$, the vector $\nabla son'$points away from the region above the tunnel, for $Y<0$, or away from the region under the bridge, for $Y>0$. Straight forward computations give the scalar product
 $$\textstyle scal=\textstyle \frac{-2(z_0^2+1)((b_1+1)z_0^2-1)(((b_1+1)^2z_0^4+2(b_1+3)z_0^2+1)Y_0+2c((b_1-1)z_0^2+1))}{((b_1-1)z_0^2+1)((b_1+1)z_0^2+3)z_0}.$$
 
The denominator of $scal$ has the sign of $z_0$. The numerator of $scal$ has the sign of $-((b_1+1)z_0^2-1)(((b_1+1)^2z_0^4+2(b_1+3)z_0^2+1)Y_0+2c((b_1-1)z_0^2+1))$. The second term in this expression is of the form $Ax+B$. From the third equation in the equations \eqref{eq:parsonli} which define the sonic' fold curve, we have $Y=-\frac{B}{A}$. So $scal$ has the sign of $-z_0((b_1+1)z_0^2-1)(Y-(-\frac{B}{A}))$. 

Let us suppose that $(b_1+1)z_0^2-1<0$, in this case $scal$ has the sign of $ z_0(Y-(-\frac{B}{A}))$ which is positive in $Son'_s$, so $z$ increases away from the region under the bridge for $Y>0$, and away from the region above the tunnel, for $Y<0$. It is easy to see that $sss$ has the same sign of $-Y_0$, so for $Y>0$, $sss$ decreases when $z$ increases, i.e., into the lateral. For $Y<0$ $sss$ decreases when $z$ decreases, so, into the region above the tunnel. 

We consider only $(b_1+1)z_0^2-1<0$, since it will be shown that is the condition for $L3$ to be satisfied.

Let us check now condition $L3$. As before, it is sufficient to check it for points in $Son'$. We will take a point $(t_0,z_0,Y_0)$ in $Son'_s$ and the Hugoniot' curve through this point. We will compute the intersections with $\mathcal {C}_s$ and $\mathcal {C}_f$, the values of the speed $s$ at these three points and verify that the inequalities conditions are satisfied, if and only if $(b_1+1)z_0^2-1<0$. This is a long straightforward computation, which was done using the software Maple\textsuperscript{\textregistered }, and we present in Appendix B.

So far we have established that admissible regions are contained in 2 of the regions bounded by $\mathcal{C}$, $Son$, $Son'$ and $Tf$: the lateral region $z>0$, $Y>0$ and $t<0$, and the above the tunnel region under the half plane $Y=0$, $t<0$. 

The lateral region $t<0$, $z>0$, $Y>0$ is bounded by $\mathcal{C}_s$ and $Son'_s$. It contains both local and non local admissible arc. What separates a local arc (starting in $\mathcal{C}_s$) from a non local arc (starting in $Son'_s$)? clearly is the surface formed by Hugoniot curves through points in $IL$. 

It remains to be determined what are the local and non local admissible regions, contained in the region $Y<0$, $t<0$, above the tunnel 

\section*{Appendices\label{sec:Appendices}}

\setcounter{section}{0}
 
\section{The surface Sigma\label{app:sigma}}

In this appendix we will obtain the equation of the surface Sigma. We know that the Hugoniot curves through points of the secondary bifurcation $B$ have two components: a straight line $hugl$ and a curve $hugc$ (see section \ref{sec:wmhc}). The secondary bifurcation is defined by $l=-2c$, \cite{Eschenazi02}, so $hugc$ is obtained as the solution of the system 

\begin{equation*}
\left \{
\begin{array}{l}
(1-Z^2)\tilde{V}-Z\tilde{U}+c=0\\
2\tilde{U}+b_1X=k\\
2\tilde{V}+ZX=-2c,
\end{array}
\right.
\label{eq:hugc}
\end{equation*}

\noindent In this way the surface Sigma is given by 

\begin{equation}
\{\tilde{U}=\frac{(Z^2-1)k-2b_1c}{2(Z^2+b_1-1},\tilde{V}=-\frac{Zk+2c(b_1-1)}{2(Z^2+b_1-1},X=-\frac{2cz-k}{(Z^2+b_1-1}\}
\label{eq:sigma}
\end{equation}
\noindent in the 5 variables $\tilde{U}$, $\tilde{V}$, $Z$, $X$, $k$. Solving the third equation in (\ref{eq:sigma}) for $k$ and substituting the result in the two others equations, we get that the surface Sigma is given by 

\begin{equation}
\{\tilde{U}=\frac{(Z^2-1)X}{2}+Zc, \tilde{V}=-(\frac{ZX}{2}+c)\}
\label{eq:sigma1}
\end{equation}
\noindent in the 4 variables $\tilde{U}$, $\tilde{V}$, $X$, $Z$. 

In order to obtain the surface Sigma in the variables $t$, $z$, $Y$, we use $Z=\displaystyle \frac{1}{z}$ and $X=zY$ in equations \eqref{eq:sigma1} getting:
\begin{equation}
\tilde{U}= \frac{(1-z^2)Y+2c}{2z};
\tilde{V}=-c-\frac{Y}{2}
\label{eq:sigma2}
\end{equation}

\noindent 
Substituting $\tilde{U}$ or $\tilde{V}$ in equation \eqref{eq:sigma2} by the corresponding expression from equation \eqref{eq:tdef}, we get that the surface Sigma is given by $$Y=-\frac{2(tz^3+tz+c)}{z^2+1}.$$

\begin{remark} 
If one is going to use the $\tilde{V}$ expression, one must remember $\tilde{V}=V_1-c$.
\end{remark}

\section{Characterizing the condition L3\label{app:L3}}

In this appendix we will proof that the necessary and sufficient condition to verify the condition L3  is that $z^2<\displaystyle \frac{1}{b_1+1}.$

As in section \ref{sec:Yzt-subsection}, parametric equations for the Hugoniot' curve through a point $(t_0,z_0,Y_0)$ is given by 

\begin{equation}
\left\{
\begin{array}{l}
Y=\displaystyle \frac{A'z^2+B'z +C'}{(z_0^2+1)((b_1-1)z^2+1)}\\
t=\displaystyle \frac{D'z^3+E'z^2+F'z+G'}{c(z_0^2+1)((b_1-1)z^4+b_1z^2+1)},
\end{array}
\right.
\label{eq:hugpontoli}
\end{equation}

\noindent where $A'$, $B'$, $C'$, $D'$, $E'$, $F'$, $G'$ are obtained from equations \eqref{eq:hugponto} by changing $Y$ into $-Y$ and $Y_0$ into $-Y_0$.

Parametric equations for Hugoniot' curve through a point of Son' are obtained from the equations \eqref{eq:hugpontoli} substituting $t_0$ by $t'_0$, $t'_0$ as in section \ref{sec:sonlifs}, getting $(Y_{hug'}(z),t_{hug'}(z))$.



We need to calculate the speed $s$ at the intersections points of the Hugoniot' curve with $\mathcal{C}$ and then write the condition $L3$. As the inequalities involved change only in $Son$, it is enough to verify them in a point of $Son'$.
The  speed $s$ at the point $(t'_0,z_0,Y_0)$ is given by the equation \eqref{eq:speedsonponto'}.


Let $z_{1}$ and $z_{2}$ be, $z_{1}< z_{2}$, the solutions of  the equation $Y_{hug'}=0$. The roots $z_{1}$ and $z_{2}$ are the $z$ coordinates of the intersection points of the Hugoniot' curve with $\mathcal {C}$. The speed, $s_{hug'}(z)$, along the Hugoniot' curve through $(t'_0,z_0,Y_0)$ is obtained by changing $t$ into $t_{hug'}(z)$ in equation \eqref{eq:speed}. We must compare the values $s_{hug'}(z_1)$ and $s_{hug'}(z_2)$ with $s_{son'}$. Denoting by $s_{hug'n}(z)$ the numerator of $s_{hug'}(z)$, $s_{hug'd}(z)$ the denominator of $s_{hug'}(z)$ and $Y_{hug'n}(z)$ the numerator of $Y_{hug'}(z)$, we can write

\begin{equation*}
s_{hug'n}(z)=p_1(z)Y_{hug'n}(z)+r_1(z)
\end{equation*}
and 
\begin{equation*} 
s_{hug'd}(z)=p_2(z)Y_{hug'n}(z)+r_2(z).
\end{equation*}
It follows that, $s_{hug'\mathcal {C}}(z)$, given by
$$s_{hug'\mathcal {C}}(z)= \frac{r_1(z)}{r_2(z)},$$ has the same value of $s_{hug'}(z)$ at the intersection points of the Hugoniot' curve through $(t'_0,z_0,Y_0)$ with $\mathcal {C}$. The expression of $s_{hug'\mathcal {C}}(z)$ is of the form $\displaystyle \frac{az+b}{cz+d}$. Our goal is to get a condition such that $s_{son'}(t'_0,z_0.Y_0)$ satisfies\\ $s_{hug'\mathcal {C}}(z_1)< s_{son'}(t'_0,z_0.Y_0)<s_{hug'\mathcal {C}}(z_2)$. In a more general way we have the following problem: given a number $s$ we want a condition such that 
$$\frac{az_1+b}{cz_1+d}<s<\frac{az_2+b}{cz_2+d},$$ 
where $z_1$ and $z_2$ are the roots of the polynomial $fz^2+gz+h$. Straightforward computations give that the condition is $$\frac{(c^2h-dcg+d^2f)s^2+(agd+bcg-2(ach+bdf))s+a^2h-abg+b^2f}{c^2h-dcg+d^2f}<0.$$

Changing $a$, $b$ into the coefficients of the numerator of $s_{hug'\mathcal {C}}(z)$, $c$ and $d$ into the coefficients of the denominator of $s_{hug'\mathcal {C}}(z)$ and $f$, $g$, $h$ into the coefficients of the numerator of $Y_{hug'}(z)$, we get that the condition is $$cond= \frac{Y_0^2((b_1+1)z_0^2-1}{2}<0,$$
it follows that the condition L3 is satisfied if and only if $\displaystyle \frac{-1}{\sqrt{b_1+1}}<z< \displaystyle \frac{1}{\sqrt{b_1+1}}.$

\newpage 
\bibliographystyle{amsplain} 
\bibliography{paper}

\end{document}